\title{Endomorphisms of singular del Pezzo surfaces }
\author{Burt Totaro}
\date{  }
\def\C{\text{\bf C}}
\def\P{\text{\bf P}}
\begin{document}
\maketitle
\newtheorem{theorem}{Theorem}[section]
\newtheorem{corollary}[theorem]{Corollary}
\newtheorem{lemma}[theorem]{Lemma}

\theoremstyle{definition}
\newtheorem{definition}[theorem]{Definition}
\newtheorem{example}[theorem]{Example}

\theoremstyle{remark}
\newtheorem{remark}[theorem]{Remark}

A natural problem of algebraic dynamics is to classify the normal
complex projective
varieties that admit an endomorphism of degree greater than 1.
In dimension 2, Nakayama made this classification
except among del Pezzo surfaces with Picard number 1
and quotient singularities (or equivalently, Kawamata log terminal
singularities)
\cite[Theorem 1.3]{Nakayama}.

For del Pezzo surfaces with Picard number 1 and canonical singularities
(or equivalently, du Val singularities),
Joshi worked out the classification in all cases except one:
the del Pezzo surface with a du Val singularity
of type $E_8$ and no nonzero vector fields, 
a surface which bears a curious resemblance to $\P^2$
\cite[Theorem 1.2]{Joshi}.
(For example, despite its singular point, it has the same
integral cohomology ring as $\P^2$. See section \ref{globalsect}
for the equation of this surface.)
The $E_8$ surface also arose as a possible
exceptional case in Ou's classification of contractions of hyperk\"ahler
4-folds, before it was excluded by Huybrechts and Xu \cite{Ou, HX}.

We now resolve the last case of canonical del Pezzo surfaces
with Picard number 1. Namely, the $E_8$ surface $X$
with no nonzero vector fields does not have an endomorphism
of degree greater than 1 (Theorem \ref{e8endo}). 
Our method also answers another
question which remained open for this surface: $X$ is not an image
of a proper toric variety by any morphism. After
work by Joshi and Gurjar--Pradeep--Zhang \cite{Joshi, Gurjar},
this completes the proof of the following results:

\begin{theorem}
\label{canonical}
Every canonical del Pezzo surface with Picard number 1
that admits an endomorphism of degree greater than 1
is the quotient of a projective toric surface by a finite group
that acts freely in codimension 1 and preserves the open torus orbit.
\end{theorem}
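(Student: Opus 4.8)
The strategy is to derive Theorem \ref{canonical} from the partial classifications already available together with the new information about the $E_8$ surface. The first step is to recall Joshi's theorem \cite[Theorem 1.2]{Joshi}: among del Pezzo surfaces of Picard number $1$ with du Val singularities, those admitting an endomorphism of degree greater than $1$ are classified, with exactly one case left undecided, namely the $E_8$ surface $X$ that carries no nonzero vector field. For each surface $S$ on Joshi's list different from $X$, the works of Joshi and of Gurjar--Pradeep--Zhang \cite{Joshi, Gurjar} realize $S$ as a quotient $T/G$ of a projective toric surface $T$ by a finite group $G$, and one reads off from those constructions that $G$ acts freely in codimension $1$ and carries the open torus orbit of $T$ to itself. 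Thus the only question that remains is whether the surface $X$ must be appended to the list.

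The decisive input, and the one genuinely new point, is Theorem \ref{e8endo}: the $E_8$ del Pezzo surface $X$ with no nonzero vector fields admits no endomorphism of degree greater than $1$. Granting this, $X$ does not occur among the surfaces in question, so every canonical del Pezzo surface of Picard number $1$ that admits an endomorphism of degree greater than $1$ is one of the surfaces already identified as a quotient of a projective toric surface of the stated kind. That is precisely the assertion of Theorem \ref{canonical}.

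Hence the whole difficulty is concentrated in Theorem \ref{e8endo}, and I expect this to be the main obstacle. The reason the $E_8$ surface escaped the earlier arguments is that it imitates $\P^2$ remarkably closely: it has Picard number $1$, the integral cohomology ring of $\P^2$, and (by hypothesis) no infinitesimal automorphisms, so none of the standard obstructions is available --- torsion in the divisor class group, a nontrivial boundary divisor forcing ramification, or a positive-dimensional automorphism group. To prove Theorem \ref{e8endo} I would instead work with the explicit equation of $X$ recorded in section \ref{globalsect} and with its minimal resolution, whose exceptional locus is an $E_8$ chain of $(-2)$-curves. Given a hypothetical endomorphism $f \colon X \to X$ of degree $>1$, its ramification divisor is numerically a positive multiple of $-K_X$; the plan is to examine that divisor together with the unique effective anticanonical curve and the behaviour of $f$ near the singular point, and to push this local and numerical analysis to a contradiction, for example with the intersection theory on the resolution or with the nonexistence of vector fields. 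I would expect the same analysis to yield the companion statement, used in the introduction, that $X$ is not a morphic image of any proper toric variety.
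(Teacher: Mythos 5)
Your derivation of Theorem \ref{canonical} itself is exactly the paper's: Joshi's classification leaves only the $E_8$ surface $X$ with no nonzero vector fields undecided, the remaining cases are covered by Joshi and Gurjar--Pradeep--Zhang, and the single new input is Theorem \ref{e8endo} excluding $X$. If you take Theorem \ref{e8endo} as given, the argument is complete and coincides with the paper's.

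The gap is in your plan for Theorem \ref{e8endo}, which is where all the content lies. The sketch --- study the ramification divisor (numerically a multiple of $-K_X$), the anticanonical pencil, and the local behaviour at the singular point, and push intersection theory on the minimal resolution or the absence of vector fields to a contradiction --- never actually produces a contradiction, and this is precisely the style of argument that failed to settle this case: the paper points out that $X$ satisfies Bott vanishing for ample Weil divisors, its smooth locus is simply connected, and it has no nontrivial contractions, so the usual numerical and covering-space obstructions all evaporate. The paper's actual proof is of a different nature. It passes to the smooth Deligne--Mumford stack $M$ over $X$, extends the Amerik--Rovinsky--Van de Ven Chern-number inequality to quasi-finite morphisms of smooth proper stacks (Lemma \ref{isolated}, Corollaries \ref{hurwitz} and \ref{inequality}), lifts any finite endomorphism of $X$ to $M$ using smoothness of $M$ in dimension 2, and then feeds in two computations: the orbifold Chern number $c_2(\Omega^1_M)=3-119/120=241/120$ coming from the order-120 stabilizer at the singular point, and the global generation of $\Omega^1_M(2)$ outside finitely many points (Lemma \ref{global}, an analysis of the elliptic pencil $|O_M(1)|$ and its three singular members, which is the technical heart of the paper). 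With $L=O_M(2)$ one gets $L\cdot c_1(\Omega^1_M)+c_2(\Omega^1_M)=1/120>0$, ruling out any endomorphism of degree greater than 1; the margin of $1/120$ shows why a sharp global-generation statement for exactly this twist is indispensable and why cruder numerical arguments on the resolution are unlikely to succeed. Likewise, the statement that $X$ is not an image of a proper toric variety is not a byproduct of your resolution-level analysis but of the same stacky Hurwitz formula applied to the multiplication endomorphisms $e_m$ of a toric cover (Theorem \ref{toric}).
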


\begin{theorem}
\label{image}
Every normal Gorenstein projective surface that is an image
of $\P^2$ is isomorphic to $\P^2/G$ for some finite group $G$,
not necessarily acting freely in codimension 1.
\end{theorem}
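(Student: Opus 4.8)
The plan is to reduce Theorem~\ref{image} to a short classification, most of which is already known. Let $f\colon\P^2\to Y$ be a surjective morphism onto a normal Gorenstein projective surface~$Y$; it is generically finite for dimension reasons. If $f$ contracted a curve $C$, then for an ample Cartier divisor $A$ on $Y$ we would have $f^*A\cdot C=A\cdot f_*C=0$; but $f^*A=\mathcal{O}(m)$ with $m\ge1$ (it is nef, and $(f^*A)^2=(\deg f)\,A^2>0$), so $f^*A\cdot C>0$, a contradiction. Hence $f$ is finite, and since $Y$ is Gorenstein the ramification formula gives $f^*(-K_Y)=\mathcal{O}(3)+R$ with $R\ge0$. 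Thus $f^*(-K_Y)$ is ample on $\P^2$, so $-K_Y$ is ample and $Y$ is a Gorenstein del Pezzo surface. Pullback modulo numerical equivalence $f^*\colon N^1(Y)_\Q\to N^1(\P^2)_\Q$ is injective (as $f$ is finite surjective), so $\rho(Y)\le\rho(\P^2)=1$, hence $\rho(Y)=1$. Finally $Y$ is unirational, so rational by Castelnuovo, whence $H^2(Y,\mathcal{O}_Y)=0$; combined with $H^1(\Y,\mathcal{O}_{\Y})=0$ for the minimal resolution $\pi\colon\Y\to Y$ and the Leray spectral sequence, this forces $R^1\pi_*\mathcal{O}_{\Y}=0$, so $Y$ has rational Gorenstein --- i.e.\ du Val --- singularities.

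So $Y$ is a del Pezzo surface with Picard number~$1$ and du Val singularities. These form a short explicit list, and by the work of Joshi and Gurjar--Pradeep--Zhang \cite{Joshi,Gurjar} every one of them that is an image of $\P^2$ is isomorphic to $\P^2/G$ for a finite group $G\subset\operatorname{PGL}_3$ --- for instance $\P(1,1,2)\cong\P^2/(\Z/2)$ via $[x:y:z]\mapsto[x:y:-z]$, whose fixed locus contains the line $\{z=0\}$, which is why $G$ need not act freely in codimension~$1$ --- with exactly one case left undecided: the del Pezzo surface $X$ of degree~$1$ with a du Val singularity of type $E_8$ and no nonzero vector fields. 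Therefore it remains to show that $X$ is \emph{not} an image of $\P^2$.

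This is the main obstacle, and it is where the methods of this paper come in. Since $\P^2$ is a proper toric surface, it suffices to prove the stronger assertion --- which our method also delivers --- that $X$ is not the image of any proper toric variety under a morphism. The strategy I would pursue is to show that such an image carries too much structure to be $X$: using the power endomorphisms on the torus of the toric variety together with the boundedness of $\Hom(\P^2,X)$ in each degree and the finiteness of $\operatorname{Aut}(X)$ (which holds because $X$ has no nonzero vector fields), one should be able to produce an endomorphism of $X$ of degree greater than~$1$, contradicting Theorem~\ref{e8endo}. The hard part is to make this precise, above all to control the ramification of the hypothetical morphism over the $E_8$ singular point, for which the explicit equation of $X$ recorded in Section~\ref{globalsect} is essential. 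Once this is done, Theorems~\ref{canonical} and~\ref{image} both follow.
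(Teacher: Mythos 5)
Your first paragraph (reduction to a Gorenstein del Pezzo surface of Picard number $1$ with du Val singularities) and the appeal to Joshi and Gurjar--Pradeep--Zhang for every case except the $E_8$ surface with no nonzero vector fields match how the paper frames the theorem. But the one step that constitutes the new content of Theorem \ref{image} --- that this $E_8$ surface $X$ is not an image of $\P^2$ --- is precisely the step you do not prove. What you offer is a strategy (``one should be able to produce an endomorphism of $X$ of degree greater than $1$, contradicting Theorem \ref{e8endo}''), and that strategy has a real obstruction, not just missing details: a finite surjection $f\colon\P^2\to X$ gives the compositions $f\circ e_m$ of unbounded degree, where $e_m$ are the toric power maps of $\P^2$, but there is no mechanism to descend $e_m$ along $f$ to a self-map of $X$ --- $f$ is not known to be Galois, the power maps need not be compatible with any group action realizing $X$ as a quotient (indeed the whole point is that $X$ is \emph{not} such a quotient), and boundedness of $\Hom(\P^2,X)$ in each degree together with finiteness of $\operatorname{Aut}(X)$ produces no repetition to exploit, since the degrees of the $f\circ e_m$ are pairwise distinct. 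So the intended contradiction with Theorem \ref{e8endo} is never reached, and Theorem \ref{e8endo} by itself does not rule out surjections from $\P^2$.

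The paper closes this gap by a different mechanism that never constructs an endomorphism of $X$: it bounds degrees of morphisms \emph{into} $X$. The finite surjection and the power maps are lifted to the smooth Deligne--Mumford stacks ($N$ over the toric source, $M$ over $X$), and Corollary \ref{hurwitz} is applied to the compositions $g\circ e_m\colon N\to M$ as $m\to\infty$; the inputs are Lemma \ref{global} (global generation of $\Omega^1_M(2)$ outside a finite set) and the orbifold Chern number $c_2(\Omega^1_M)=241/120$, so that $L\cdot c_1(\Omega^1_M)+c_2(\Omega^1_M)=1/120>0$, which contradicts the bigness of the anticanonical class of the toric source (Theorems \ref{morphisms} and \ref{toric}). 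Your closing suggestion to ``control the ramification over the $E_8$ point'' via the explicit equation is likewise not what is needed: the lifting to the stack handles the singular point uniformly. As written, your proposal reproduces the known reduction and the known cases, but leaves the part of Theorem \ref{image} that this paper actually supplies unproved.
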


In many cases, one can show that a variety has no nontrivial endomorphism
by showing that it (or some related variety) fails to satisfy
Bott vanishing \cite{KT};
but that seems not to help
for the $E_8$ surface $X$ with no nonzero vector fields.
Indeed, $X$ satisfies Bott vanishing
for ample Weil divisors, by Baker \cite{Baker};
its smooth locus is simply connected (so we cannot relate the problem
to a covering); and $X$ has no nontrivial contractions since
it has Picard number 1. (The other del Pezzo surface
with a du Val singularity of type $E_8$,
with a nonzero vector field, was known
not to have an endomorphism of degree greater than 1,
because it does not satisfy Bott vanishing \cite[Theorem 2.4]{Joshi}.)

Instead, we extend the method of Amerik--Rovinsky--Van de Ven,
which yields Chern number inequalities for a variety
with an endomorphism, to Deligne-Mumford stacks \cite{ARV}.
The method requires
precise estimates on global generation for the cotangent bundle twisted
by an ample line bundle. It should be useful for other hard cases
in the classification of varieties with
endomorphisms.

This work was supported by NSF grant DMS-2054553.
Thanks to Rohan Joshi for useful conversations.

\section{Notation}

Let $M$ be an algebraic stack with finite stabilizers $I_M\to M$,
so that $M$ has a coarse moduli space $M\to X$ by Keel and Mori
\cite[Tag 0DUT]{Stacks}. We say that a line bundle $L$ on $M$
is {\it ample }if some positive power of $L$ is pulled back
from an ample line bundle on $X$. 
Some authors have proposed stronger definitions
of ampleness on stacks \cite{Alper, BOW}, but this notion is enough
for our applications.

The paper mostly works over $\C$. In this context,
the classification of canonical del Pezzo surfaces with Picard number 1
was worked out by Furushima, Miyanishi--Zhang, and Ye
\cite{Furushima, MZ, Ye}.

By definition, a {\it Fano variety }is a klt projective
variety $X$ with ample anticanonical class. A del Pezzo surface
is a Fano variety of dimension 2.

\section{Chern number inequalities}

Amerik, Rovinsky, and Van de Ven showed that the only smooth
Fano 3-fold with Picard number 1 that admits an endomorphism
of degree greater than 1 is $\P^3$ \cite{ARV}. Their method
was to prove certain Chern number inequalities for a smooth variety
with an endomorphism. In some cases, this method does not suffice,
and so they also needed Amerik's analysis of endomorphisms of $X$
in terms of rational curves on $X$ \cite{Amerik}.

We extend ARV's method to smooth stacks here. The method requires
explicit information about global generation of twists of the cotangent
bundle by ample line bundles. For the $E_8$ del Pezzo surface
with no nonzero vector fields,
we will analyze the latter problem in Lemma \ref{global}.

\begin{lemma}
\label{isolated}
Let $X$ and $Y$ be Deligne-Mumford stacks (or varieties)
over a field $k$ of characteristic zero. Assume
that $X$ and $Y$ are smooth and proper over $k$,
and let $f\colon X\to Y$ be a quasi-finite morphism over $k$.
Let $L$ be a line bundle on $Y$ such that $\Omega^1_Y(L)$
is globally generated outside a zero-dimensional closed subset of $Y$.
Then, for a general section $s\in H^0(Y,\Omega^1_Y(L))$,
its pullback $t$ in $H^0(X,\Omega^1_X(f^*L))$
has zero-dimensional zero set.
\end{lemma}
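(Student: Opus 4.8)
The plan is to reduce the statement to a dimension count on the incidence variety of sections and their zeros, pulled back along $f$. Write $V = H^0(Y,\Omega^1_Y(L))$, a finite-dimensional $k$-vector space, and consider the evaluation map $\mathrm{ev}\colon V\otimes_k \mathcal{O}_Y \to \Omega^1_Y(L)$. By hypothesis there is a zero-dimensional closed subset $Z\subset Y$ such that $\mathrm{ev}$ is surjective over $Y\setminus Z$. Since $f$ is quasi-finite, $f^{-1}(Z)\subset X$ is also zero-dimensional. On $X\setminus f^{-1}(Z)$ the pulled-back evaluation $f^*\mathrm{ev}\colon V\otimes_k \mathcal{O}_X \to \Omega^1_X(f^*L)$ (using $f^*\Omega^1_Y \to \Omega^1_X$, which is an isomorphism since $f$ is étale in characteristic zero — $f$ quasi-finite and both smooth of the same dimension) is still surjective. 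So it suffices to show that for general $s\in V$, the induced section $t$ on $X\setminus f^{-1}(Z)$ vanishes only on a zero-dimensional set; outside $f^{-1}(Z)$ the zero locus is then zero-dimensional, and since $f^{-1}(Z)$ is itself finite, the whole zero set of $t$ is zero-dimensional.

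First I would set up the universal zero locus
\[
  W = \{(x,s)\in (X\setminus f^{-1}(Z))\times V : t_s(x) = 0\},
\]
where $t_s$ is the pullback of $s$. The first projection $W\to X\setminus f^{-1}(Z)$ is surjective with fibers that are linear subspaces of $V$: over a point $x$, the fiber is the kernel of the composite $V \to (\Omega^1_X(f^*L))_x / \mathfrak{m}_x(\cdots)$, i.e. $V\to (\Omega^1_X(f^*L)\otimes k(x))$, which is surjective by the global generation just established. Hence each fiber has dimension exactly $\dim V - n$, where $n = \dim X$, so $\dim W = n + (\dim V - n) = \dim V$. (For the stack case, one works with the coarse space or an atlas; the fiber dimension computation is étale-local and unchanged.) Then the second projection $W\to V$ has image of dimension at most $\dim V$, and a general fiber — the zero set of $t_s$ for general $s$ — has dimension at most $\dim W - \dim(\overline{\mathrm{im}}) $. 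The point is that a general section does lie in the image (the zero section shows $0\in \mathrm{im}$, but more usefully every $s$ lies in $W$'s image as long as $t_s$ has some zero; if $t_s$ has no zero for general $s$ we are trivially done), so for general $s$ the fiber $W_s$ has dimension $\le \dim W - \dim \overline{W\to V} \le 0$ provided $W\to V$ is dominant. If $W\to V$ is not dominant, its image is a proper closed subset and a general $s$ is not in it, meaning $t_s$ is nowhere zero — again fine.

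The one subtlety I expect to be the main obstacle is making the fiber-dimension argument rigorous at the level of stacks rather than schemes: one must check that $\Omega^1$ of a smooth DM stack behaves like a vector bundle for the evaluation/jet computation, that "general section has zero-dimensional zero locus" is detectable after passing to a smooth atlas, and that quasi-finiteness of $f$ survives base change to the atlas. All of this is standard once one notes that for DM stacks in characteristic zero a quasi-finite morphism between smooth proper stacks of the same dimension is étale (by the infinitesimal lifting criterion applied on atlases, using generic smoothness), so $f^*\Omega^1_Y \xrightarrow{\sim} \Omega^1_X$, and the whole argument is étale-local on $Y$. I would phrase the incidence-variety computation on a smooth atlas $U\to X$, noting that zero-dimensionality of the zero locus is preserved and reflected under the (quasi-finite, flat) atlas map. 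The remaining steps — surjectivity of $V\to \Omega^1_X(f^*L)\otimes k(x)$ away from $f^{-1}(Z)$, and the $\dim W = \dim V$ computation — are then routine.
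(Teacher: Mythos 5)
There is a genuine gap, and it is located exactly at the point your proposal identifies as routine: the claim that ``$f^*\Omega^1_Y \to \Omega^1_X$ is an isomorphism since $f$ is \'etale in characteristic zero --- $f$ quasi-finite and both smooth of the same dimension.'' This is false. A quasi-finite (even finite) morphism between smooth proper varieties of the same dimension in characteristic zero need not be \'etale: $x\mapsto x^2$ on $\P^1$, or any branched cover, is a counterexample; generic smoothness only gives \'etaleness over a dense open subset, leaving a ramification divisor of codimension one. In the application in this paper, $f$ is an endomorphism of degree $a^2>1$ of a surface (stack) whose smooth locus is simply connected, so $f$ is necessarily ramified along a nonempty divisor; your argument reduces the lemma to the \'etale case, in which it is essentially contentless (there the zero set of $t$ is just $f^{-1}$ of the zero set of $s$), and says nothing about the case that actually matters.

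Concretely, the incidence-variety count breaks down along the ramification divisor $R$ of $f$: at a point $x\in R$ the composite evaluation $V\to \Omega^1_X(f^*L)\otimes k(x)$ factors through the derivative $df_x$, which has rank at most $n-1$ there, so the fibers of $W\to X$ jump over $R$ and the equality $\dim W=\dim V$ no longer follows from surjectivity of the evaluation away from $f^{-1}(Z)$. One can salvage an incidence-type argument, but only by stratifying by the rank of $df$ and showing that on each irreducible component $C$ of the relevant rank locus the derivative is not identically zero along $C$ --- this is where quasi-finiteness and characteristic zero are genuinely used, since $df|_C\equiv 0$ would force $f$ to be constant on $C$. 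This is precisely what the paper's proof does: it observes that if the pullback of a general section vanished on a curve, that curve would be a component $V$ of the ramification divisor on which all pulled-back sections vanish; since $f|_V$ is non-constant, the kernel of $f^*(\Omega^1_Y(L))|_V\to \Omega^1_X(f^*L)|_V$ has generic rank $1$ (in dimension $2$), so all global sections of $\Omega^1_Y(L)$ restricted to $f(V)$ would lie in a proper subsheaf, contradicting global generation outside a zero-dimensional set (with a rank-stratification version, via the loci $R_i$, in higher dimension). Your write-up omits this entire mechanism, so as it stands the proof does not establish the lemma.
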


Throughout, we write $\Omega^1_X$ for the differentials of $X$
over $k$, $\Omega^1_{X/k}$.

\begin{proof}
For smooth varieties rather than smooth stacks, this is essentially
\cite[Lemma 1.1]{ARV}. The same proof works, as follows. To begin,
the assumption on global generation implies that a general section
$s\in H^0(Y,\Omega^1_Y(L))$ vanishes only on a zero-dimensional
closed subset of $Y$, by an easy Bertini-type
theorem \cite[Remark 6]{Kleiman}.

We first give the proof
when $X$ and $Y$ have dimension 2, the case needed in this paper.
On \'etale coordinate charts for $X$ and $Y$, the map
$\varphi\colon s\mapsto t=f^*(s)$ is locally given by the matrix
of derivatives of $f$. So if, for a general section $s$,
its pullback $f^*(s)$ vanishes on a curve (meaning an
integral closed substack of dimension 1), then this curve
must be an irreducible component $V$ of the ramification divisor
of $f$ (the locus where $f$ is not \'etale, which is not all of $X$
because $k$ has characteristic zero).
In this case, $f^*(s)$ must vanish on $V$ for all
$s\in H^0(Y,\Omega^1_Y(L))$.

Since $f$ is quasi-finite, it is not constant on the curve $V$,
and so the derivative of $f\colon V\to Y$ is generically nonzero
(using again that $k$ has characteristic zero). Equivalently,
the kernel of 
$$f^*(\Omega^1_Y(L))|_{V}\to \Omega^1_X(f^*L)|_{V}$$
generically has rank 1. By the previous paragraph, all
global sections of $\Omega^1_Y(L)$ restricted to the curve $V$
lie in this rank-1 subbundle. This contradicts our assumption
that $\Omega^1_Y(L)$ is globally generated outside a
zero-dimensional closed subset.

Following Amerik--Rovinsky--Van de Ven, this proof generalizes
to arbitrary dimension as follows. Let $X$ and $Y$ have dimension $m$.
Let $R_i$ be the locus in $X$ where the derivative of $f$ has rank
at most $i$. Since $f$ is quasi-finite and $k$ has characteristic zero,
$R_i$ has dimension $\leq i$ for each $i$.
Now suppose that for a general section
$s\in H^0(X,\Omega^1_X(L))$, the pullback section $t=f^*(s)$ vanishes
along a curve $C_s$. Let $j$ be such that $C_s\subset R_j$
and $C_s\not\subset R_{j-1}$ for a general $s$. Let $V$ be an
irreducible component of $R_j$ containing $C_s$, for a general $s$.

The section $s$ induces a section $s_V$ of the sheaf
$$A_j = f^*(\Omega^1_Y(L))|_V/B_j,$$
where
$$B_j=\ker(f^*(\Omega^1_Y(L))|_V\to \Omega^1_X(f^*L)|_V).$$
Clearly the section $s_V$ vanishes on $C_s$.

The sheaf $s_j$ on $V\subset R_j$ has rank $j$, it is locally free
outside $R_{j-1}$, and it is generated by the sections $s_V$
outside a finite set (for $s\in H^0(Y,\Omega^1_Y(L))$). By the easy
Bertini-type theorem above, outside of $R_{j-1}$ the general $s_V$
vanishes only on a finite set, so $C_s\subset R_{j-1}$,
a contradiction.
\end{proof}

As in Amerik--Rovinsky--Van de Ven (for smooth varieties
rather than smooth stacks),
we have the consequence:

\begin{corollary}
\label{hurwitz}
(the ``Hurwitz formula'') In the situation of Lemma
\ref{isolated}, with $X$ and $Y$ of dimension $n$,
$$\deg(f)\; c_n(\Omega^1_Y(L))\leq c_n(\Omega^1_X(f^*L)).$$
\end{corollary}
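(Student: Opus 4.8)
The plan is to deduce the Chern number inequality from Lemma \ref{isolated} by a degree count, following the Amerik--Rovinsky--Van de Ven argument. First I would fix a general section $s\in H^0(Y,\Omega^1_Y(L))$ and let $t=f^*(s)\in H^0(X,\Omega^1_X(f^*L))$ be its pullback; by Lemma \ref{isolated}, the zero scheme $Z(t)\subset X$ is zero-dimensional. The key observation is that the length of the zero scheme of a section of a rank-$n$ vector bundle $E$ on a smooth proper $n$-dimensional stack, \emph{when that zero scheme is zero-dimensional}, equals $\deg c_n(E)$; this is the standard fact that the top Chern class is represented by the zero locus of a section transverse to the zero section, and more generally, if the zero locus is merely of the expected dimension zero (not necessarily reduced), its length is $\geq$ nothing is lost and in fact equals $c_n(E)$ because $E$ sits in a Koszul-type situation—one can also argue by a small deformation of $s$ within the globally-generated-outside-a-finite-set locus to make it transverse, which does not change the count. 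On stacks this requires the existence of a well-behaved Chow or cohomology theory with a projection formula, which holds for smooth proper Deligne--Mumford stacks over a field of characteristic zero (e.g. with $\Q$-coefficients), so I would invoke that.

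Next I would relate $\deg c_n(\Omega^1_X(f^*L))$ to $\deg f$ times $\deg c_n(\Omega^1_Y(L))$. The natural route is: on $Y$, a general section $s$ of $\Omega^1_Y(L)$ also has zero-dimensional zero scheme (stated in the proof of Lemma \ref{isolated} via the easy Bertini argument), so $\deg c_n(\Omega^1_Y(L)) = \mathrm{length}\, Z(s)$. Now $Z(t) = Z(f^*s)$ contains the scheme-theoretic preimage $f^{-1}(Z(s))$, since pulling back a section whose value is zero gives a section whose value is zero; and away from the ramification divisor $f$ is \'etale, so over the complement of the finitely many branch points lying under $Z(s)$—generically $Z(s)$ avoids the branch locus since $s$ is general—the preimage $f^{-1}(Z(s))$ has length exactly $\deg(f)\cdot \mathrm{length}\,Z(s)$. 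Because $Z(t)$ is a zero-dimensional subscheme \emph{containing} this preimage, its length is at least $\deg(f)\cdot\mathrm{length}\,Z(s)$, giving the inequality
$$\deg(f)\,c_n(\Omega^1_Y(L)) = \deg(f)\,\mathrm{length}\,Z(s) \leq \mathrm{length}\,Z(t) = c_n(\Omega^1_X(f^*L)).$$

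I would need to be slightly careful about two points, which I expect to be the main obstacle. First, I must ensure the general $s$ on $Y$ can be chosen so that $Z(s)$ is disjoint from the branch locus of $f$; this follows because $Z(s)$ moves in a family covering the locus where $\Omega^1_Y(L)$ is globally generated (a dense open), while the branch locus is a proper closed subset, so a general $s$ has $Z(s)$ avoiding it—here one uses that $Y$ has dimension $\geq 1$ so that avoiding a divisor is a nonempty condition; the case $n=0$ is trivial. Second, and more delicate for stacks: the identity $\mathrm{length}\,Z(c_n\text{-section}) = \deg c_n(E)$ and the multiplicativity $\mathrm{length}\,f^{-1}(\mathrm{pt}) = \deg f$ must be interpreted with the stacky (rational) degree, i.e. counting points with the reciprocals of the orders of their automorphism groups; once one works consistently with $\Q$-valued degrees on Deligne--Mumford stacks, the projection formula $f_* f^* = \deg(f)$ holds and the argument goes through verbatim. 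So I would phrase the proof in terms of $f_*$ of the zero-cycle class $[Z(t)] \geq f_*f^*[Z(s)] = \deg(f)[Z(s)]$ in the Chow group $A_0(Y)_\Q$, take degrees, and identify $\deg[Z(s)] = c_n(\Omega^1_Y(L))$, $\deg[Z(t)] = c_n(\Omega^1_X(f^*L))$ as above.

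Alternatively, and perhaps more cleanly, I would avoid the branch-locus discussion entirely by working directly with $t$ on $X$: since $Z(t)$ is zero-dimensional, $\deg c_n(\Omega^1_X(f^*L)) = \deg[Z(t)]$ in $A_0(X)_\Q$, and pushing forward, $f_*[Z(t)] = f_*[Z(f^*s)]$; because the section $f^*s$ of $f^*(\Omega^1_Y(L)) \to \Omega^1_X(f^*L)$ factors through pullback, one checks that $f_*[Z(f^*s)]$ as a cycle dominates $\deg(f)\cdot[Z(s)]$—the inequality (rather than equality) absorbing any extra zeros of $t$ along the ramification divisor—and then taking degrees yields the claim. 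The heart of the matter in either formulation is the elementary but essential fact that a section with zero-dimensional vanishing computes the top Chern class exactly (no excess intersection), together with the projection formula on Deligne--Mumford stacks; I do not expect the extension from varieties to stacks to introduce any genuinely new difficulty beyond bookkeeping with $\Q$-coefficients, which is why ARV's proof transfers.
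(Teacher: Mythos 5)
Your proposal is correct and is essentially the paper's own argument: take a general $s$, use Lemma \ref{isolated} to get zero-dimensional zero sets for $s$ and $t=f^*s$, note that $Z(t)$ contains the scheme-theoretic preimage of $Z(s)$ (whose rational degree is $\deg(f)$ times that of $Z(s)$, by the projection formula for Deligne--Mumford stacks), and use that a section with zero-dimensional zero scheme represents the top Chern class with no excess intersection. Your ``alternative'' formulation is the one that matches the paper and is preferable, since the first formulation's claim that a general $Z(s)$ avoids the branch locus is not automatic at the finitely many points where $\Omega^1_Y(L)$ fails to be globally generated (all sections could vanish at such a point), whereas the containment $Z(t)\supseteq f^{-1}(Z(s))$ needs no such positioning.
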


\begin{proof}
Let $s$ be a general section of $\Omega^1_Y(L)$. By Lemma
\ref{isolated}, both $s$ and $f^*(s)\in H^0(X,\Omega^1_X(f^*L))$
have zero-dimensional zero sets.
Clearly the substack $\{f^*(s)=0\}$ in $X$
contains the pullback of the substack $\{s=0\}$ in $Y$, which is a
local complete intersection
substack of $Y$. It follows
that the degree of $\{f^*(s)=0\}$ in $X$ is least the degree
of that pullback, which is $\deg(f)$ times the degree of $\{s=0\}$ in $Y$.
(These degrees are rational numbers, since $X$ and $Y$
are Deligne-Mumford stacks.) The top Chern class of a vector bundle
is represented by the cycle associated to the zeros of any section
for which the zero set has the expected dimension.
That proves the desired inequality.
\end{proof}

\begin{corollary}
\label{inequality}
Let $X$ be a Deligne-Mumford stack (or variety) of dimension $n>0$
over a field $k$
of characteristic zero. Assume that $X$ is smooth and proper over $k$.
Let $L$ be an ample line bundle on $X$
such that $\Omega^1_X(L)$ is globally
generated outside a finite set. Suppose that
$$L^{n-1}c_1(\Omega^1_X)+L^{n-2}c_2(\Omega^1_X)+\cdots+c_n(\Omega^1_X)>0,$$
or that this expression is $\geq 0$ and $X$ is Fano.
Then $X$ has no endomorphism $f$ such that
$f^*(L)$ is numerically equivalent to $aL$ with $a>1$. 
\end{corollary}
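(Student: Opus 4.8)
The plan is to argue by contradiction, applying the Hurwitz formula (Corollary~\ref{hurwitz}) to a putative endomorphism $f\colon X\to X$ with $f^*L\equiv aL$, $a>1$, viewed as a quasi-finite self-map of $X$. First one checks $f$ is quasi-finite: since $L$ is ample, $f^*L$ is numerically equivalent to the ample class $aL$, hence ample, so $f$ cannot contract a positive-dimensional curve $C$ (on which $f^*L$ would restrict to a trivial, hence degree-$0$, bundle). Being a morphism of stacks proper over $k$, $f$ is then finite, and comparing top self-intersections gives $a^nL^n=(f^*L)^n=\deg(f)\,L^n$ with $L^n>0$, so $\deg(f)=a^n$.

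Next, apply Corollary~\ref{hurwitz} with $Y=X$ and the given $L$; its hypotheses hold by assumption ($X$ smooth and proper over a field of characteristic zero, $\Omega^1_X(L)$ globally generated outside a finite set). This yields
\[
a^n\, c_n\bigl(\Omega^1_X(L)\bigr)\le c_n\bigl(\Omega^1_X(f^*L)\bigr).
\]
Expanding both sides by the splitting-principle identity $c_n(E\otimes M)=\sum_{i=0}^n c_1(M)^{n-i}c_i(E)$ for a rank-$n$ bundle $E$, and writing $P=\sum_{i=1}^n L^{n-i}c_i(\Omega^1_X)$ for the expression in the statement, the left side becomes $a^n(L^n+P)$ while the right side, using $f^*L\equiv aL$, becomes $a^nL^n+\sum_{i=1}^n a^{n-i}L^{n-i}c_i(\Omega^1_X)$. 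Cancelling $a^nL^n$ and dividing by $a^n$ gives
\[
\sum_{i=1}^n\bigl(1-a^{-i}\bigr)\,L^{n-i}c_i(\Omega^1_X)\le 0.
\]

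The substantive step is then to apply this to the iterates $f^k$: these are again finite, satisfy $(f^k)^*L\equiv a^kL$ with $a^k>1$, so we may replace $a$ by $a^k$ above. Letting $k\to\infty$, each coefficient $1-a^{-ki}$ tends to $1$, so the inequality degenerates to $P\le 0$, contradicting $P>0$. In the boundary case where $P\ge 0$ and $X$ is Fano, the same limit forces $P=0$; subtracting the relation $P=0$ from the displayed inequality (with $a$ replaced by $a^k$) and multiplying by $a^k$ leaves $\sum_{i=1}^n a^{k(1-i)}L^{n-i}c_i(\Omega^1_X)\ge 0$, and letting $k\to\infty$ now gives $L^{n-1}c_1(\Omega^1_X)=L^{n-1}\cdot K_X\ge 0$, contradicting the ampleness of $-K_X$ (for $L$ ample, $L^{n-1}$ meets every ample class positively). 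The one point requiring care is the bookkeeping on Deligne--Mumford stacks: $\deg(f)$ and all the intersection numbers are rational numbers, the identity for $c_n(E\otimes M)$ and the projection formula hold in the rational cohomology (or Chow) ring of a smooth proper DM stack, and ``ample'' is meant in the weak sense of the Notation section; none of this affects the formal computation, which is exactly the Amerik--Rovinsky--Van de Ven argument transported to stacks.
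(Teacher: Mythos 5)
Your proposal is correct and follows essentially the same route as the paper: apply the Hurwitz formula (Corollary \ref{hurwitz}) to the iterates of $f$, expand $c_n$ of the twisted cotangent bundle, let the iterate tend to infinity to get $\sum_i L^{n-i}c_i(\Omega^1_X)\le 0$, and in the equality case extract $L^{n-1}K_X\ge 0$, contradicting $X$ Fano. The extra details you include (finiteness of $f$, $\deg f=a^n$, the rational bookkeeping on stacks) are consistent with, and implicit in, the paper's argument.
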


In the inequality, we identify a line bundle with a Cartier divisor
up to linear equivalence, via the first Chern class.

\begin{proof}
Suppose that $X$ has an endomorphism $f$ with $f^*(L)\equiv aL$
for some $a>1$. Since $L^n>0$, $f$ has degree $a^n$,
which is greater than 1. Applying Corollary \ref{hurwitz} to the iterate $f^m$
for any $m\geq 0$, we have
$$a^{mn}c_n(\Omega^1_X(L))\leq c_n(\Omega^1_X(a^m L)),$$
that is,
\begin{multline*}
a^{mn}(L^n+L^{n-1}c_1(\Omega^1_X)+\cdots+c_n(\Omega^1_X))\\
\leq a^{mn}L^n + a^{m(n-1)}L^{n-1}c_1(\Omega^1_X)+\cdots+c_n(\Omega^1_X).
\end{multline*}
Taking $m$ to infinity, it follows that
$$L^{n-1}c_1(\Omega^1_X)+\cdots+c_n(\Omega^1_X)\leq 0,$$
as we want.

Next, suppose that this last inequality is an equality. Then,
again taking $m$ to infinity, it follows that $L^{n-1}c_1(\Omega^1_X)\geq 0$,
that is, $L^{n-1}K_X\geq 0$ (where $K_X$ is the canonical line bundle).
In particular, this fails if $X$ is Fano.
So, if $X$ is Fano, we have in fact
$$L^{n-1}c_1(\Omega^1_X)+\cdots+c_n(\Omega^1_X)<0.$$
\end{proof}

\section{Morphisms}

Here we apply Corollary \ref{hurwitz} to some related problems:
bounding the degree of morphisms to a given variety,
and showing that a given variety is not the image
of a toric variety.

\begin{theorem}
\label{morphisms}
Let $X$ and $Y$ be Deligne-Mumford stacks (or varieties) of dimension $n>0$
over a field $k$
of characteristic zero. Assume that $X$ and $Y$ are smooth and proper over $k$,
with Picard number 1.
Let $L$ be an ample line bundle on $Y$ such that $\Omega^1_Y(L)$ is globally
generated outside a finite set. Suppose that
$$L^{n-1}c_1(\Omega^1_Y)+L^{n-2}c_2(\Omega^1_Y)+\cdots+c_n(\Omega^1_Y)>0,$$
or that this expression is $\geq 0$ and $X$ is Fano.
Then there is an upper bound on the degrees of all morphisms
from $X$ to $Y$.
\end{theorem}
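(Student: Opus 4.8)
The plan is to reduce to finite surjective morphisms, apply the ``Hurwitz formula'' of Corollary \ref{hurwitz} to each such $f\colon X\to Y$, and then observe that the resulting inequality, read as a relation between powers of $f^*L$ on $X$, cannot hold once $\deg f$ is large. For the reduction: if $f$ is non-constant, then $f^*L$ is nef (as $L$ is ample) and numerically nontrivial, since otherwise the morphism from $X$ to the coarse space of $Y$ defined by a suitable power of $L$ would be constant, contradicting that $f$ has positive-dimensional image. Because $X$ has Picard number 1, a nef numerically nontrivial line bundle on it is ample, so $f^*L$ is ample; hence $f$ contracts no curve, so $f$ is quasi-finite, and being proper it is finite. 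Since $\dim X=\dim Y=n$, its image is all of $Y$. Thus Corollary \ref{hurwitz} applies to $f$, and $(f^*L)^n=\deg(f)\,L^n$.

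Next I would expand both Chern numbers using $c_n(E\otimes M)=\sum_{i=0}^n c_1(M)^{n-i}c_i(E)$ for a rank-$n$ bundle $E$ and a line bundle $M$. Corollary \ref{hurwitz} then becomes
$$\deg(f)\sum_{i=0}^n L^{n-i}c_i(\Omega^1_Y)\leq \sum_{i=0}^n (f^*L)^{n-i}c_i(\Omega^1_X),$$
and the $i=0$ terms cancel because $(f^*L)^n=\deg(f)L^n$. Setting $Q=\sum_{i=1}^n L^{n-i}c_i(\Omega^1_Y)$, so that $Q>0$ (or $Q\geq 0$ with $X$ Fano) by hypothesis, this reads
$$\deg(f)\,Q\leq \sum_{i=1}^n (f^*L)^{n-i}c_i(\Omega^1_X).$$
Now fix an ample class $A$ on $X$; since $X$ has Picard number 1 we may write $f^*L\equiv tA$ for some real $t>0$ depending on $f$, so that $\deg(f)=t^nA^n/L^n$. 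The left side becomes $(A^nQ/L^n)\,t^n$, while the right side is the value at $t$ of the \emph{fixed} polynomial $P(t)=\sum_{i=1}^n t^{n-i}(A^{n-i}c_i(\Omega^1_X))$, which has degree $n-1$ and leading coefficient $A^{n-1}c_1(\Omega^1_X)=A^{n-1}K_X$.

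It remains to compare growth rates in $t$. If $Q>0$, then $(A^nQ/L^n)\,t^n\leq P(t)$ with $\deg P=n-1<n$ confines $t$ to a bounded interval, hence $\deg(f)=t^nA^n/L^n$ is bounded independently of $f$. If $Q=0$, the inequality reads $0\leq P(t)$; but when $X$ is Fano the class $-K_X$ is ample, so $A^{n-1}K_X<0$ and $P(t)\to-\infty$ as $t\to\infty$, so again $t$, and hence $\deg(f)$, is bounded. Either way we obtain an explicit upper bound on $\deg f$, which proves the theorem. The step that takes genuine care is the first one: it is precisely the Picard number 1 hypothesis on both $X$ and $Y$ that makes every non-constant morphism finite and surjective (so that Corollary \ref{hurwitz} applies at all) and that places all the classes $f^*L$ on the single ray $\R_{>0}A$, so that the right-hand sides above are governed by one polynomial $P$; once that is in place the remaining argument is elementary, the substantive work having already gone into Lemma \ref{isolated} and Corollary \ref{hurwitz}.
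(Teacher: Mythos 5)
Your proof is correct and takes essentially the same route as the paper: use the Picard number 1 hypothesis to make every relevant morphism finite with $f^*L$ on the ray of a fixed ample class, apply Corollary \ref{hurwitz}, and compare the degree-$n$ growth of the left side with the degree-$(n-1)$ polynomial $P(t)$ on the right, handling the borderline case via the sign of $A^{n-1}K_X$ when $X$ is Fano. The paper phrases this as extracting the coefficients of $m^n$ and $m^{n-1}$ from a hypothetical sequence of morphisms of unbounded degree, but the content is identical.
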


\begin{proof}
Let $H_X$ be an ample line bundle on $X$. For any morphism
$f\colon X\to Y$, we must have the numerical equivalence
$f^*L\equiv mH_X$ for some nonnegative
rational number $m$. Then $f$ has degree $m^n(H_X^n/L^n)$.
So if there are morphisms $f$ of arbitrarily high degree,
then these morphisms have $m$ arbitrarily large. For $m>0$,
$f^*L$ is ample, and so $f$ contracts no curves; that is, $f$ is finite.

For such a morphism $f$, Corollary \ref{hurwitz} gives that
\begin{multline*}
m^n(H_X^n/L^n)(L^n+L^{n-1}c_1(\Omega^1_Y)+\cdots+c_n(\Omega^1_Y))\\
\leq m^nH_X^n+m^{n-1}H_X^{n-1}c_1(\Omega^1_X)+\cdots+c_n(\Omega^1_X).
\end{multline*}
Since $m$ can be arbitrarily large, looking at the coefficient of $m^n$
gives that 
$$L^{n-1}c_1(\Omega^1_Y)+\cdots+c_n(\Omega^1_Y)\leq 0,$$
as we want. If equality holds, then looking at the coefficient of $m^{n-1}$
gives that $H_X^{n-1}c_1(\Omega^1_X)\geq 0$, that is,
$H_X^{n-1}K_X\geq 0$. That cannot occur if $X$ is Fano, as we want.
\end{proof}

One would expect that the coarse moduli space of $Y$
is not an image of any proper toric
variety under the assumption of Theorem \ref{morphisms}. For smooth varieties
rather than smooth stacks, Occhetta and Wi\'{s}niewski showed much more:
the only smooth complex projective variety $Y$ with Picard number 1
that is an image of a proper toric variety is $\P^n$
\cite[Theorem 1.1]{Occhetta}. For varieties $Y$
with klt singularities, we have only the following
result in dimension 2.

\begin{theorem}
\label{toric}
Let $Y$ be a klt projective surface
over a field $k$
of characteristic zero. Then $Y$ has quotient singularities;
let $M$ be the corresponding smooth Deligne-Mumford stack.
Let $L$ be an ample line bundle on $M$ such that $\Omega^1_M(L)$ is globally
generated outside a finite set. Suppose that
$$L\cdot c_1(\Omega^1_M)+c_2(\Omega^1_M)\geq 0.$$
Then there is no surjective morphism from a proper toric variety
to $Y$.
\end{theorem}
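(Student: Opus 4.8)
The plan is to assume there is a surjective morphism from a proper toric variety to $Y$ and derive a contradiction, in three reductions followed by the argument of Corollary \ref{hurwitz} applied to the ``power endomorphisms'' of a toric structure. \emph{(i) Reduction to a surface.} If $T\to Y$ is surjective with $T$ proper toric of dimension $\geq 3$, restricting to the normalization of the closure in $T$ of a general $2$-dimensional subtorus of the big torus gives a surjective morphism onto $Y$ from a proper toric surface; so we may assume $g\colon T\to Y$ is a surjective, hence generically finite, morphism from a proper toric surface. \emph{(ii) Reduction to a finite morphism.} Fix an ample line bundle $A$ on $Y$; then $g^*A$ is globally generated and big, hence linearly equivalent to a torus-invariant divisor $D$ with $\mathcal O_T(D)$ semiample and big, so $T_1:=\Proj\bigoplus_{m\geq0}H^0(T,mD)$ is a projective toric surface, $g$ factors as $T\to T_1\xrightarrow{g_1}Y$, and $g_1^*A=\mathcal O_{T_1}(1)$ is ample; in particular $g_1$ is finite and surjective. \emph{(iii) Reduction to stacks.} The surface $T_1$ has only cyclic quotient singularities; let $\mathcal T\to T_1$ be its canonical stack, a smooth proper toric Deligne--Mumford stack with $-K_{\mathcal T}=\sum_\rho D_\rho$, the sum over rays of the (nonzero effective) family of torus-invariant prime divisors.

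The key claim is that $g_1$ lifts to a quasi-finite morphism $h\colon\mathcal T\to M$. Over the preimage of the smooth locus of $Y$ this is immediate, since there $M$ equals $Y$. Near one of the finitely many points lying over $Y_{\mathrm{sing}}$, the situation is, \'etale-locally, a finite morphism $\mathbb A^2/H\to\mathbb A^2/G$ with $H$ and $G$ acting without pseudoreflections (as in the construction of the canonical stack); pulling back the quotient $\mathbb A^2\to\mathbb A^2/G$ and normalizing yields a finite cover of the smooth surface $\mathbb A^2$ that is \'etale in codimension $1$ (the branch locus of $\mathbb A^2\to\mathbb A^2/G$ is the origin, whose preimage under the finite map is again finite), hence \'etale by purity, hence trivial; a section then provides the lift $[\mathbb A^2/H]\to[\mathbb A^2/G]$, and the local lifts glue by separatedness of $M$. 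Since $\mathcal T\to T_1\xrightarrow{g_1}Y$ and $M\to Y$ are quasi-finite, so is $h$.

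Set $N:=h^*L$, which is ample on $\mathcal T$, with $N^2=\deg(g_1)\,L^2$ (where $L^2$ is the self-intersection number of $L$ on $M$). For each $r\geq1$ the $r$-th power map of the torus extends to a finite endomorphism $F_r$ of $T_1$ of degree $r^2$ with $F_r^*\equiv r$ on $\mathrm{Pic}(T_1)_{\Q}$; applying the construction of the previous paragraph to the finite morphism $g_1\circ F_r\colon T_1\to Y$ produces a quasi-finite morphism $h_r\colon\mathcal T\to M$ with $\deg(h_r)=r^2\deg(g_1)$ and $h_r^*L\equiv rN$. Corollary \ref{hurwitz} applied to $h_r$, together with $c_2(\Omega^1_{\mathcal T}(rN))=c_2(\Omega^1_{\mathcal T})+r\,K_{\mathcal T}\cdot N+r^2N^2$, gives
$$\deg(g_1)\,r^2\bigl(L\cdot c_1(\Omega^1_M)+c_2(\Omega^1_M)\bigr)\leq c_2(\Omega^1_{\mathcal T})+r\,K_{\mathcal T}\cdot N .$$
By hypothesis the coefficient of $r^2$ on the left is $\geq0$; letting $r\to\infty$ forces it to vanish, and then the coefficient of $r$ forces $K_{\mathcal T}\cdot N\geq0$. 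But $N$ is ample and $-K_{\mathcal T}=\sum_\rho D_\rho$ is a nonzero effective divisor, so $K_{\mathcal T}\cdot N=-\sum_\rho(D_\rho\cdot N)<0$, a contradiction. This proves the theorem.

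I expect the main obstacle to be reduction (iii) together with the lifting claim: producing a quasi-finite morphism from a \emph{smooth proper toric} Deligne--Mumford stack to the canonical stack $M$ lifting $g_1$, and identifying its canonical class with the effective divisor $-\sum_\rho D_\rho$. This is where the ``no pseudoreflection'' characterization of the canonical stack is used essentially (it is what makes the relevant covers \'etale in codimension $1$, hence trivial). The remaining ingredients---the two preliminary reductions and the behavior of the toric power maps---are formal or standard toric computations, granting Corollary \ref{hurwitz}.
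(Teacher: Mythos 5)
Your proposal is correct and follows essentially the same route as the paper: reduce to a finite surjection from a toric surface, lift it together with the toric multiplication maps to the canonical smooth Deligne--Mumford stacks, apply Corollary \ref{hurwitz} to the composites of unbounded degree, and derive a contradiction from the negativity of the canonical class against an ample class on the toric side. The differences are only in implementation: the paper reduces to a finite morphism via Stein factorization plus the fact that contractions of toric varieties are toric, and performs the lifting analytically as in the proof of Theorem \ref{e8endo}, whereas you use a subtorus/semiample-contraction reduction and an algebraic purity-plus-simple-connectedness argument for the lift.
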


\begin{proof}
Since $Y$ is a klt surface, it has quotient singularities.
Suppose that there is a surjective morphism from a proper
toric variety $X$ to $Y$. Let $X\to X_1\to Y$ be the Stein
factorization, so $X\to X_1$ is a contraction and $X_1\to Y$
is finite. Every contraction of a toric variety
is toric \cite[Proposition 2.7]{Tanakatoric}. (This was known
earlier for projective toric varieties \cite[Theorem 6.28
and exercise 7.2.3]{CLS}.) So, replacing $X$ by $X_1$,
we can assume that the surjection $f\colon X\to Y$
is finite.

Then $X$ is a toric surface, and so it also has quotient singularities.
Let $N$ be the corresponding smooth Deligne-Mumford stack
\cite[Proposition 2.8]{Vistoli}.
We have coarse moduli spaces $N\to X$ and $M\to Y$,
and $N$ and $M$ have trivial stabilizer in codimension 1.
Since $N$ is smooth, the proof of Theorem \ref{e8endo} shows that
the finite morphism $f\colon X\to Y$ lifts to a quasi-finite
morphism $g\colon N\to M$. Then $H_N:=g^*L$ is an ample
line bundle on $N$.

For each positive integer $m$, the toric variety $X$
has a ``multiplication'' endomorphism $e_m$, extending the morphism
$x\mapsto x^m$ on the open torus in $X$. Since this morphism is finite,
the same argument shows that $e_m$ lifts to an endomorphism
of the stack $N$, which we also call $e_m$. Then $e_m^*(H_N)\cong
mH_N$. So the morphism $g\circ e_m\colon N\to M$ satisfies
$(g\circ e_m)^*(L)\cong mH_N$. Apply Corollary \ref{hurwitz}
to these morphisms with $m$ arbitrarily large.
As in the proof of Theorem \ref{morphisms}, it follows
that
$$L\cdot c_1(\Omega^1_M)+c_2(\Omega^1_M)\leq 0,$$
and if equality holds, then $H_N\cdot K_N\geq 0$.
But every toric variety $X$ has $-K_X$ big, and hence
$-K_N$ is big since $N\to X$ is an isomorphism in codimension 1.
It follows that $H_N\cdot K_N<0$. So in fact we must have
$$L\cdot c_1(\Omega^1_M)+c_2(\Omega^1_M)< 0.$$
\end{proof}

\section{Global generation for the $E_8$ del Pezzo surface}
\label{globalsect}

\begin{lemma}
\label{global}
Let $X$ be the del Pezzo surface over $\C$
with a du Val singularity of type $E_8$ and no nonzero vector fields.
Write $O_X(1)$ for the anticanonical line bundle $-K_X$,
which is the ample generator
of the Picard group of $X$. Then the reflexive sheaf $\Omega^{[1]}_X(2)$
is globally generated outside finitely many points. Equivalently,
if $M$ denotes the corresponding smooth stack, the vector bundle
$\Omega^1_M(2)$ is globally generated outside finitely many points.
\end{lemma}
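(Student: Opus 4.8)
My plan is to reduce the statement to an explicit computation with the defining equation of $X$. On a normal surface one has $\Omega^{[1]}_X\cong \mathcal{T}_X\otimes\omega_X$, where $\mathcal{T}_X=\mathcal{H}om(\Omega^1_X,O_X)$ is the reflexive tangent sheaf: this holds on the smooth locus, hence everywhere, both sides being reflexive and $X$ being Gorenstein. Since $\omega_X=O_X(-1)$ we get $\Omega^{[1]}_X(2)\cong \mathcal{T}_X(-K_X)=\mathcal{T}_X(1)$, and the bundle $\Omega^1_M(2)$ is globally generated off a finite set if and only if $\mathcal{T}_X(1)$ is (away from the $E_8$ point the two coincide, and the point of $M$ over the $E_8$ point may be put into the bad locus; also $H^0(M,\Omega^1_M(2))=H^0(X,\Omega^{[1]}_X(2))$ by Hartogs on the smooth stack $M$). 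Because $\mathcal{T}_X$ is reflexive on the Gorenstein surface $X$ it is saturated with respect to the irrelevant ideal of the anticanonical ring $R=\bigoplus_n H^0(X,O_X(n))$, which by the equation above is the hypersurface ring $\C[x_0,x_1,x_2,x_3]/(F)$ with $\deg x_i=(1,1,2,3)$ and $\deg F=6$; passing through the ambient $\P(1,1,2,3)$ and its Euler sequence, this identifies $H^0(X,\Omega^{[1]}_X(2))$ with the space of degree-$1$ derivations $D=\sum_i g_i\,\partial_{x_i}$ of $\C[x_0,\dots,x_3]$ satisfying $D(F)\in(F)$, modulo the submodule spanned by $x_0E$ and $x_1E$, where $E$ is the Euler derivation. (Equivalently, on the minimal resolution $\pi\colon\widetilde X\to X$ one has $\pi_*T_{\widetilde X}=\mathcal{T}_X$ and $\pi^*O_X(1)=-K_{\widetilde X}$ by crepancy, so $H^0(X,\Omega^{[1]}_X(2))=H^0(\widetilde X,T_{\widetilde X}(-K_{\widetilde X}))$; Riemann--Roch gives $\chi=-5$ and $H^2=0$ because the rational surface $\widetilde X$ admits no sub-line-bundle of $\Omega^1_{\widetilde X}$ of positive degree, so the content is a lower bound on $h^1$.)

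Next I would write the sections down explicitly. Solving $D(F)\in(F)$ in degree $1$ is a finite linear-algebra problem. Two solutions are visible without any computation: the image of $E$ (which is being quotiented out), and --- because the part of $F$ involving only $x_2,x_3$ is $x_3^2+x_2^3$ up to a unit --- the derivation $D_1=x_3\,\partial_{x_2}-\tfrac32 x_2^2\,\partial_{x_3}$, for which $D_1(F)=0$; dually there is the twisted $1$-form $x_1\,dx_0-x_0\,dx_1$. I would then exhibit a basis $D^{(1)},\dots,D^{(N)}$ of the solution space modulo $\langle x_0E,x_1E\rangle$. Note $N\geq 3$: a rank-$2$ reflexive sheaf that is globally generated off a finite set must have at least three generically independent global sections, since two sections $s_1,s_2$ already fail to generate along the effective divisor $\{s_1\wedge s_2=0\}\in|O_X(3)|$. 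So the lemma forces $N\geq3$, and one checks this directly from $F$.

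Finally, global generation off a finite set. The locus in $X$ where $\mathcal{T}_X(1)$ is not generated by $D^{(1)},\dots,D^{(N)}$ is the common zero set of the wedge products $w_{ij}:=D^{(i)}\wedge D^{(j)}$, which are global sections of $\wedge^2(\mathcal{T}_X(1))=O_X(3)$, i.e.\ elements of the $7$-dimensional degree-$3$ part $R_3$ of the anticanonical ring. By the Cramer identity $w_{jk}D^{(i)}-w_{ik}D^{(j)}+w_{ij}D^{(k)}=0$ (three sections of a rank-$2$ sheaf), this locus has dimension $\leq 1$ as soon as some $w_{ij}\neq 0$ (so generic generation is automatic once $N\geq2$), and it is finite as soon as the family $\{w_{ij}\}$ has no common prime divisor on $X$. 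Thus the endgame is to compute the degree-$3$ polynomials $w_{ij}$ for the basis found above and verify that they are jointly coprime modulo $F$. The main obstacle is precisely this step: it requires the precise equation of the $E_8$ surface with no vector fields and a careful bookkeeping of the derivation system and of the resulting $2\times 2$ minors in $R_3$. A more geometric alternative I would also try is to restrict everything to a general member $C$ of the anticanonical pencil --- a smooth elliptic curve in the smooth locus of $X$, disjoint from the $E_8$ point --- and to show (i) the restriction $H^0(X,\Omega^{[1]}_X(2))\to H^0(C,\Omega^{[1]}_X(2)|_C)$ is surjective, and (ii) the rank-$2$, degree-$3$ bundle $\Omega^{[1]}_X(2)|_C$ is globally generated; for (ii) it suffices that $\Omega^{[1]}_X(2)|_C$ is stable, which holds exactly when the conormal extension $0\to N^*_{C/X}\to \Omega^1_X|_C\to \omega_C\to 0$ is non-split, which one expects because blowing up the base point of $|-K_X|$ turns the pencil into a non-isotrivial rational elliptic fibration with a fiber of type $\mathrm{II}^*$. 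Since a general member of the pencil passes through a general point of $X$, global generation along the general member forces the non-generation locus to contain no curve, hence to be finite. In this route the obstacle is establishing the vanishing behind (i) and the non-splitness in (ii).
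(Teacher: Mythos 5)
Your proposal is a plan rather than a proof, and the decisive steps are exactly the ones left undone. In your main route (graded derivations on the hypersurface $\{w^2+z^3+x^5y+x^4z=0\}\subset\P(1,1,2,3)$), the whole content of the lemma is concentrated in the endgame you defer: determining the full space of degree-$1$ derivations $D$ with $D(F)\in(F)$ modulo the Euler multiples, and verifying that the resulting $2\times 2$ minors $w_{ij}\in R_3$ have no common divisor on $X$. You acknowledge this is ``the main obstacle,'' and it is precisely where the distinction between the two $E_8$ surfaces enters; the lemma is delicate (the paper's final inequality wins by $1/120$), so nothing short of carrying out this computation settles it. Moreover, the one explicit derivation you do exhibit is wrong for this surface: with $F=x_3^2+x_2^3+x_0^5x_1+x_0^4x_2$, your $D_1=x_3\,\partial_{x_2}-\tfrac32x_2^2\,\partial_{x_3}$ gives $D_1(F)=x_0^4x_3\notin(F)$; it annihilates only the degenerate equation $w^2+z^3+x^5y$ (the surface \emph{with} a vector field), and one must correct it to $x_3\,\partial_{x_2}-(\tfrac32x_2^2+\tfrac12x_0^4)\,\partial_{x_3}$. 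This slip is a warning sign that the surface-specific bookkeeping, which is the whole point, has not been engaged.

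Your ``geometric alternative'' is essentially the paper's actual argument, and its two inputs are again exactly the hard parts, which you name but do not prove: (i) surjectivity of $H^0(X,\Omega^{[1]}_X(2))\to H^0(C,\Omega^{[1]}_X(2)|_C)$ for a general anticanonical curve $C$, which amounts to a vanishing of the type $H^1(\Omega^{[1]}_X(1))=0$ (the paper gets this by first proving $H^0(M,\Omega^1_M(1))=0$ via the pencil structure and then using $h^1(M,\Omega^1_M)=1$ to see the connecting map is an isomorphism), and (ii) non-splitness of the conormal extension on a general member, which the paper deduces from non-isotriviality of the pencil (forced by the nodal fibers). Your surrounding logic here is sound and even suggests a genuine simplification: since a nonsplit extension $0\to O(p)\to\Omega^1_M(2)|_C\to O_C(2)\to 0$ on an elliptic curve is stable of slope $3/2$, hence globally generated everywhere, and every curve in $X$ meets the ample general member $C$, one could avoid the paper's separate analysis of the three singular fibers. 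But as written, both (i) and (ii) are asserted as expectations, so the argument has a genuine gap at its core; to complete it you must either do the derivation/minor computation of Route A or supply the cohomology vanishing and non-splitness of Route B.
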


\begin{proof}
Explicitly, $X$ is the sextic surface $\{w^2+z^3+x^5y+x^4z=0\}$
in the weighted projective plane
$W=\P^2(1,1,2,3)$ \cite{Gurjar}.
Here $X$ misses the singular points of $W$,
but it has an $E_8$ singularity at the point $q=[0,1,0,0]$.
(There are two isomorphism classes of del Pezzo surfaces with
a du Val singularity of type $E_8$,
the other being the surface $\{w^2+z^3+x^5y=0\}$
in $W$. That one is a degeneration of $X$ and has a nonzero vector field.
Another way to describe the difference is that our surface $X$
has $H^1(X,TX)=0$;
that is, every locally trivial
first-order deformation of $X$ is trivial.)

Let $M$ be the corresponding smooth Deligne-Mumford stack over $\C$.
Thus we have a coarse moduli map $M\to X$,
and $M$ has the binary icosahedral group of order 120 as stabilizer group
at one point $q$ (mapping to $[0,1,0,0]$ in $X$).

The line bundle $-K_X$ pulls back to the line bundle $-K_M$; write
$O_X(1)$ and $O_M(1)$ for these line bundles.
From the equation for $X$, we see that the linear system $|O_X(1)|$
is a pencil of genus-1 curves through the point $p=[0,0,-1,1]$.
Here $H^0(M,O_M(1))\cong H^0(X,O_X(1))$, and so we have the same description
of the linear system $|O_M(1)|$. Likewise, $H^0(M,\Omega^1_M(2))
\cong H^0(X,\Omega^{[1]}_X(2))$, by definition of reflexive differentials,
and so it will suffice to show that $\Omega^1_M(2)$ is globally generated
outside finitely many points.

The curves in the linear system $|O_X(1)|$ are the curves of the form
$\{ax+by=0\}\cap X$. We compute that there are exactly three singular
curves in this linear system:
$\{y=\pm \sqrt{-4/27}x\}\cap X$ (both isomorphic to the nodal cubic curve) and 
$\{x=0\}\cap X$ (containing the $E_8$ singularity $q$ of $X$).
Write $F_1,F_2,F_3$
for the corresponding curves in the stack $M$, all going through
the point $p$. Thus $F_1$ and $F_2$
are isomorphic to the nodal cubic curve. The curve $F_3$ is more complicated,
since it contains the point $q$ of $M$ with nontrivial stabilizer group,
and it is also not a smooth stack at that point. The coarse moduli space
$C_3$ of $F_3$ is isomorphic to the cuspidal cubic curve. (See Figure 1.)

\begin{figure}
\centering
\begin{tikzpicture}
\draw (-1,0.2) .. controls (-0.5,0.05) .. (0,0)
.. controls (1,0) and (4,1) .. (3,1)
  .. controls (2,1) and (3,0) .. (4,0);
\node (F1) at (4.3,0) {$F_1$};
\draw (-1*0.707+0.2*0.707,1*0.707+0.2*0.707) .. controls (-0.5*0.707+0.05*0.707,0.5*0.707+0.05*0.707) .. (0,0)
.. controls (0.707,-0.707) and (5*0.707,-3*0.707) .. (4*0.707,-2*0.707)
  .. controls (3*0.707,-0.707) and (3*0.707,-3*0.707) .. 
  (4*0.707,-4*0.707);
\node (F2) at (4.3*0.707,-4.3*0.707) {$F_2$};
\draw (0.2,1) .. controls (0.05,0.5) .. (0,0)
  .. controls (-0.05,-0.5) and (-1,-3) .. (0,-3)
  .. controls (-1,-3) and (-0.7,-3.4) .. (-1,-4);
\node (F3) at (-1.1,-4.3) {$F_3$};
\node (p) at (-0.2,-0.2) {$p$};
\node (q) at (0.2,-3) {$q$};
\fill[color=black] (0,-3) circle (0.4ex);
\end{tikzpicture}
\caption{The singular curves in the elliptic pencil $|O_M(1)|$
on $M$}
\end{figure}
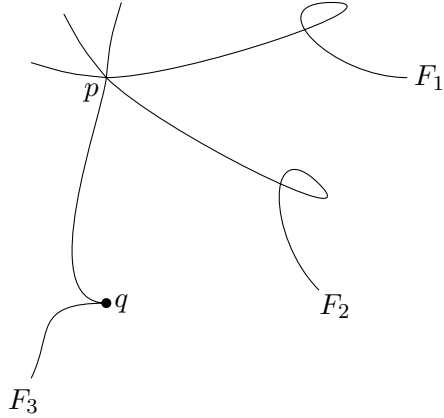

Here $M$ (or equivalently its coarse moduli space $X$) has Betti numbers
$1,0,1,0,1$, because the minimal resolution of $X$ adds
8 $(-2)$-curves and is isomorphic to a blow-up of $\P^2$ at 8 points. By the
degeneration of the Hodge spectral sequence
for $M$ \cite[Theorem 1.9]{Satriano}, it follows
that $h^0(M,\Omega^1_M)=0$, $h^1(M,\Omega^1_M)=1$, and $h^2(M,\Omega^1_M)=0$.
To analyze the sections of $\Omega^1_M(2)$, we will use the exact sequence
$$0\to \Omega^1_M(a-1)\to \Omega^1_M(a)\to \Omega^1_M(a)|_F\to 0$$
of coherent sheaves on $M$. Here $a$ is an integer and $F$ is any curve
in the linear system $|O_M(1)|$. As a first step, we will analyze
sections of $\Omega^1_M(1)$. In view of the exact sequence above, we are led
to ask about sections of $\Omega^1_M(1)|_F$.

To analyze these sections, consider the exact sequence of coherent sheaves
on $F$:
\begin{equation*}
0\to N_{F/M}^*\to \Omega^1_M|_F\to \Omega^1_F\to 0.
\tag{\textasteriskcentered}
\label{ext}
\end{equation*}
The conormal bundle $N_{F/M}^*$ is a line bundle even for $F$ singular,
namely $O_F(-1)$. The sheaf $\Omega^1_F$ is a line bundle on $F$
if $F$ is smooth, but not for the singular
curves $F$. First consider a general curve $F$ in the linear system
of $O_M(1)$; so $F$ is a smooth curve of genus 1. Then the extension 
\eqref{ext} is classified by an element $\alpha$
in $H^1(F,N_{F/M}^*\otimes TF)$.
Multiplying by $\alpha$ gives a linear map $H^0(F,N_{F/M})\to H^1(F,TF)$,
which describes how deforming $F$ inside $M$ deforms $F$ up to isomorphism.
Since the genus-1 curve $F_1$ in the pencil $|O_M(1)|$
is nodal (hence has
$j$-invariant $\infty$), the pencil is not isotrivial. Thus, for a general
curve $F$ in the pencil, the element $\alpha$ must be nonzero
(using that we are in characteristic zero).
That is, for $F$ general, the extension \eqref{ext} is nontrivial.

After tensoring with $O(1)$, the exact sequence $0\to N_{F/X}^*(1)
\to \Omega^1_M(1)|_F\to \Omega^1_F(1)\to 0$ is still non-split,
for $F$ general. Here $N_{F/X}^*(1)\cong O_F$ and $\Omega^1_F(1)\cong
O_F(1)$. (Note that $O_F(1)$ is the line bundle $O(p)$ associated
to the point $p$ in $F$.) Consider the long exact sequence of cohomology,
$$\xymatrix@=5pt{
& \C & & \C \\
0 \ar[r] & H^0(F,N_{F/M}^*(1))\ar[r]\ar@{=}[u] &
H^0(F,\Omega^1_M(1)) \ar[r] & H^0(F,\Omega^1_F(1)) \ar@{=}[u]
\ar `r[d] `[dlll] `[ddlll] [ddll]\\
& & & \\
& H^1(F,N_{F/M}^*(1))\ar[r]\ar@{=}[d] &
H^1(F,\Omega^1_M(1)) \ar[r] & H^1(F,\Omega^1_F(1)). \ar@{=}[d]\\
& \C & & 0
}$$
I claim that the map $H^0(F,\Omega^1_M(1))\to H^0(F,\Omega^1_F(1))\cong\C$
must be zero. If not, let $s$ be a section of $\Omega^1_M(1)|_F$ that maps
to the standard section of $\Omega^1_F(1)\cong O(p)$ that vanishes
exactly at $p$.
By subtracting a suitable section of the trivial line bundle $N_{F/X}^*(1)$,
we can assume that $s$ vanishes at $p$. But then $s$ can be viewed as
a splitting of the surjection $\Omega^1_M(1)|_F\to O(p)$, a contradiction.
The claim is proved.
Thus the map $H^0(F,N_{F/X}^*(1))\cong\C\to H^0(F,\Omega^1_M(1))$
is an isomorphism. By the long exact sequence above,
it follows that $H^1(F,\Omega^1_M(1))=0$.

Thus, on $M$, we have the long exact sequence:
$$\xymatrix@=5pt{
& 0 & & \C \\
& H^0(M,\Omega^1_M)\ar[r]\ar@{=}[u] &
H^0(M,\Omega^1_M(1)) \ar[r] & H^0(F,\Omega^1_M(1)) \ar@{=}[u]
\ar `r[d] `[dlll] `[ddlll] [ddll]\\
& & & \\
& H^1(M,\Omega^1_M)\ar[r]\ar@{=}[d] &
H^1(M,\Omega^1_M(1)) \ar[r] & H^1(F,\Omega^1_M(1)). \ar@{=}[d]\\
& \C & & 0
}$$
I claim that $H^0(M,\Omega^1_M(1))$ is zero. To see this,
let $s$ be a section of $\Omega^1_M(1)$. By the analysis above, on a general
curve $F$ in $|O_M(1)|$, $s$ lies in the rank-1 subbundle $N_{F/M}^*(1)$.
So that is true on all of $M-p$ (where this subbundle is defined).
Equivalently, in terms of the morphism $f\colon M-p\to \P^1$
given by $O_M(1)$, we can view
$s$ as an element of $H^0(M-p, f^*(\Omega^1_{\P^1}(1)))=H^0(M-p,O_M(-1))
=H^0(M,O_M(-1))=0$. Thus $H^0(M,\Omega^1_M(1))=0$, as claimed.
Therefore, the boundary map
in the sequence above is an isomorphism, and so also $H^1(M,\Omega^1_M(1))=0$.

We made the analysis above for a general curve $F$ in the linear system
$|O_M(1)|$. Now let
$F$ be any curve in $|O_M(1)|$, possibly singular. 
By the exact sequence
$$\xymatrix@=5pt{
H^0(M,\Omega^1_M(1))\ar[r]\ar@{=}[d]
& H^0(F,\Omega^1_M(1))\ar[r]
& H^1(M,\Omega^1_M)\ar[r]\ar@{=}[d] & H^1(M,\Omega^1_M(1)),\ar@{=}[d]\\
0 & & \C & 0
}$$
we have $h^0(F,\Omega^1_M(1))=1$. By the exact sequence
$$\xymatrix@=5pt{
0\ar[r] & H^0(F,N_{F/M}^*(1))\ar[r]\ar@{=}[d]
& H^0(F,\Omega^1_M(1))\ar[r]\ar@{=}[d]
& H^0(F,\Omega^1_{F}(1)) \ar[r]
& H^1(F,N_{F/M}^*(1)),\ar@{=}[d]\\
& \C & \C & & \C
}$$
we have $h^0(F,\Omega^1_{F}(1))\leq 1$. So the space of sections
of the sheaf $\Omega^1_{F}$
supported on a finite set has dimension at most 1 (since this property is not
affected by tensoring with a line bundle). This will be relevant
when $F$ is singular.

We now turn to the bundle $\Omega^1_M(2)$. As above, for every curve
$F$ in the elliptic pencil $|O_M(1)|$, we have an exact sequence
$$H^0(M,\Omega^1_M(1))\to H^0(M,\Omega^1_M(2))
\to H^0(F,\Omega^1_M(2))\to H^1(M,\Omega_M(1)).$$
The outer two groups are zero, and so $H^0(M,\Omega^1_M(2))$ maps
isomorphically to $H^0(F,\Omega^1_M(2))$.

To analyze the latter group, use the exact sequence of coherent sheaves
on $F$, $0\to N_{F/M}^*(2)\to \Omega^1_M(2)|_F\to \Omega^1_F(2)\to 0$.
Here $N_{F/M}^*(2)$ is isomorphic to $O_F(1)=O(p)$, which has base locus
the point $p$. For $F$ smooth, 
$\Omega^1_F(2)$ is isomorphic to $O_F(2)$, which is basepoint-free
(using that $F$ has genus 1). Also, $H^1(F,N_{F/M}^*(2))=0$,
and so we have a short exact sequence on $H^0$. 
It follows that $h^0(F,\Omega^1_M(2))=3$ and that
$\Omega^1_M(2)|_F$ is globally generated outside the point $p$.
By the previous paragraph, it follows that 
$h^0(M,\Omega^1_M(2))=3$ and that $\Omega^1_M(2)$ is globally
generated outside the three singular curves $F_1,F_2,F_3$ in the pencil.

Now let $F$ be one of the singular curves in the pencil $|O_M(1)|$.
By two paragraphs back, we know
that $H^0(F,\Omega^1_M(2))$ has dimension 3, and these sections
are restrictions of global sections of $\Omega^1_M(2)$ on $M$.
Consider the exact sequence
$$0\to N_{F/M}^*(2)\to \Omega^1_M(2)|_{F}\to \Omega^1_{F}(2)\to 0$$
of coherent sheaves on $F$. Here $N_{F/M}^*(2)$ is the line bundle 
$O(p)$ on $F$; note that $p$ is a smooth point of $F$. Let $C$ be the coarse
moduli space of $F$ (which is the same as $F$ except when $F$ is $F_3$).
Then $C$ is a curve of arithmetic genus 1, and so
$h^0(C,O(p))=1$ and $h^1(C,O(p))=0$
\cite[Tag 0E3A]{Stacks}. Since $\pi\colon F\to C$ is a good moduli space
in Alper's sense (using that we are in characteristic zero),
we have $H^j(F,\pi^*E)\cong H^j(C,E)$ for every integer $j$
and every
coherent sheaf $E$ on $C$ \cite[Definition 4.1, Proposition 4.5]{Alper}.
So $h^0(F,O(p))=1$ and $h^1(F,O(p))=0$, or equivalently
$h^0(F,N_{F/M}^*(2))=1$ and $h^1(F,N_{F/M}^*(2))=0$.
In particular, the line bundle $N_{F/M}^*(2)$
is globally generated outside finitely many points of $F$.

Since $h^1(F,N_{F/M}^*(2))=0$, our exact sequence of sheaves on $F$
gives a short exact sequence
$$0\to H^0(F,N_{F/M}^*(2))\to H^0(F,\Omega^1_M(2)|_{F})
\to H^0(F,\Omega^1_{F}(2))\to 0.$$
Therefore,
$h^0(F,\Omega^1_{F}(2))=2$. We showed earlier that the space
of sections of $\Omega^1_F$ supported on a finite set has dimension
at most 1, and so the same is true for $\Omega^1_F(2)$.
Therefore,
there is a section of $\Omega^1_{F}(2)$ that is not supported
on a finite set.
It follows that $\Omega^1_M(2)|_{F}$ is globally generated outside finitely
many points of $F$. As we said, these sections extend to $M$.
This completes the proof
that $\Omega^1_M(2)$ is globally generated outside finitely many points
of $M$.
\end{proof}

\section{Conclusions on the $E_8$ del Pezzo surface}

\begin{theorem}
\label{e8endo}
Let $X$ be the del Pezzo surface over $\C$ with a du Val singularity
of type $E_8$
and no nonzero vector fields.
Then $X$ has no endomorphism of degree greater than 1.
\end{theorem}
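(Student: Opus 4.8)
The plan is to deduce the theorem from Corollary~\ref{inequality}, applied to the smooth Deligne--Mumford stack $M$ associated to $X$, after lifting a hypothetical endomorphism. Suppose $f\colon X\to X$ is an endomorphism of degree greater than~1. Since $X$ has Picard number~1 with ample generator $O_X(1)=-K_X$, we have $f^*O_X(1)\cong aO_X(1)$ for some positive integer $a$, and $\deg(f)=a^2$, so $a\geq 2$; moreover $f$ is finite, being surjective onto a surface of Picard number~1.

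The main step is to lift $f$ to a quasi-finite morphism $\widetilde f\colon M\to M$ with $\widetilde f^{\,*}O_M(1)\cong aO_M(1)$ (this is also the lifting statement used in the proof of Theorem~\ref{toric}). I would argue locally at the unique singular point $q$ of $X$. Since $X$ has du Val singularities, $\omega_X$ is invertible and trivial near $q$, so comparing canonical classes in the ramification formula $K_X=f^*K_X+R_f$ shows that $R_f$ vanishes near every point of $f^{-1}(q)$; that is, $f$ is \'etale in codimension~1 there. If $f(q)$ were a smooth point, then near $q$ the map $f$ would be a finite morphism onto a smooth surface germ that is \'etale in codimension~1; by purity of the branch locus and simple connectivity of a punctured smooth germ it would be an isomorphism, contradicting that $q$ is singular. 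Hence $f(q)=q$. Repeating the purity argument at $q$, the punctured germ at $q$ is a connected finite \'etale cover of $(\C^2/G)\setminus\{0\}$, hence of the form $(\C^2/H)\setminus\{0\}$ for a subgroup $H\leq G$; since the source germ is again the $E_8$ singularity we get $|H|=|G|$, so $H=G$ and $f$ is a local isomorphism at $q$. At any smooth point of $f^{-1}(q)$ the same reasoning identifies $f$ with the universal covering $\C^2\to\C^2/G$. In each case $f$ manifestly lifts to the presentation $[\C^2/G]$ defining $M$ near $q$, and over $X^{\mathrm{sm}}=M\setminus\{q\}$ there is nothing to do, so $\widetilde f$ exists and $\widetilde f^{\,*}O_M(2)\cong aO_M(2)$ with $a>1$.

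It then remains to verify the hypotheses of Corollary~\ref{inequality} for $M$ with $L=O_M(2)$ and $n=2$. Here $L$ is ample, being pulled back from the ample line bundle $-2K_X$ on $X$, and $\Omega^1_M(2)$ is globally generated outside a finite set by Lemma~\ref{global}. For the numerical inequality, $c_1(\Omega^1_M)=K_M=\pi^*K_X$, so $L\cdot c_1(\Omega^1_M)=-2K_X^2=-2$ since the $E_8$ del Pezzo surface has degree $K_X^2=1$; and $c_2(\Omega^1_M)=c_2(TM)$ equals the orbifold Euler characteristic of $M$, which is $\chi(X\setminus\{q\})+\tfrac1{120}=\chi(X)-1+\tfrac1{120}=2+\tfrac1{120}$, because $X$ has Betti numbers $1,0,1,0,1$ and the stabilizer of $M$ at $q$ is the binary icosahedral group of order $120$. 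Therefore
$$L\cdot c_1(\Omega^1_M)+c_2(\Omega^1_M)=-2+\Bigl(2+\tfrac1{120}\Bigr)=\tfrac1{120}>0.$$
By Corollary~\ref{inequality}, $M$ has no endomorphism $g$ with $g^*L$ numerically equivalent to $aL$ for some $a>1$; but $\widetilde f$ would be such an endomorphism, a contradiction. Hence $X$ has no endomorphism of degree greater than~1.

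I expect the lifting step to be the most delicate part to write cleanly --- one must check that finite self-maps of $X$ genuinely respect the stacky structure of $M$ at $q$ --- while the rest is bookkeeping together with the two substantial inputs already available, Lemma~\ref{global} and Corollary~\ref{inequality}. It is worth noting how narrow the margin is: the analogous quantity for $\P^2$ with $L=O(2)$ is $2\cdot(-3)+3=-3<0$, which is precisely why $\P^2$ admits endomorphisms of higher degree, whereas for $X$ the value $\tfrac1{120}>0$ is exactly what powers the argument.
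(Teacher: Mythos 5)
Your overall strategy coincides with the paper's: lift the hypothetical endomorphism to the smooth Deligne--Mumford stack $M$, check the hypotheses of Corollary \ref{inequality} with $L=O_M(2)$ via Lemma \ref{global}, and conclude from $L\cdot c_1(\Omega^1_M)+c_2(\Omega^1_M)=-2+241/120=1/120>0$. That part, including the orbifold Euler characteristic computation, is correct. But the lifting step --- which you rightly single out as the delicate point --- contains a genuine error. You claim that ``comparing canonical classes in the ramification formula $K_X=f^*K_X+R_f$ shows that $R_f$ vanishes near every point of $f^{-1}(q)$.'' The ramification formula is only a global linear equivalence; near any fixed point every divisor class involved is locally trivial, so it imposes no local constraint whatsoever on where $R_f$ sits. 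Globally one gets $R_f\sim(a-1)(-K_X)$, a nonzero ample effective divisor, and nothing prevents it from passing through $q$ or through other points of $f^{-1}(q)$. Consequently the subsequent purity-of-branch-locus arguments (which need $f$ \'etale in codimension 1 near the fibre over $q$), the conclusions $f(q)=q$, the local isomorphism at $q$, and the identification of $f$ with $\C^2\to\C^2/G$ at smooth preimages of $q$ are all unjustified --- and they are also stronger statements than the lifting requires.

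The lift itself is true, and the paper obtains it by a softer, purely local-topological argument that you could substitute directly: away from the finitely many points of $M$ mapping to $q$ under $f\circ\pi$, the lift of $M\to X\xrightarrow{f}X$ is automatic; at such a point $z$, since $M$ is smooth of dimension 2, a small punctured ball $B\setminus\{z\}$ in a coordinate chart is simply connected, so the map $B\setminus\{z\}\to N\setminus\{q\}$ lifts to the universal cover $\C^2\setminus\{0\}\to(\C^2\setminus\{0\})/G$ (the punctured chart of $M$ at $q$), and the lift extends over the puncture by normality. This requires no control of the ramification divisor and no claim that $f(q)=q$; it yields a quasi-finite endomorphism of $M$ with $\widetilde f^{\,*}O_M(1)\equiv aO_M(1)$, after which the rest of your argument goes through verbatim.
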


\begin{proof}
Suppose that there is an endomorphism $f\colon X\to X$
of degree greater than 1. Since $X$ has Picard number 1,
the pullback of the ample line bundle $O_X(1):=-K_X$ must be numerically
equivalent to $O_X(a)$ for some real number $a>1$,
which in particular is ample.
So $f$ does not contract any curves, and hence $f$ is finite.

Let $M$ be the smooth proper
Deligne-Mumford stack associated to $X$;
there is a coarse moduli map $M\to X$.
I claim that $f$ lifts to an endomorphism of $M$. To see that, consider
the composition $M\to X\xrightarrow[]{f} X$. Clearly this lifts uniquely
to $M$ except possibly at the finitely many points $z$ of $M$ that are mapped
to the singular point $q\in X$. The point is that $M$ is smooth of complex
dimension 2. As a result,
for a small ball $B$ in a coordinate chart around $z$,
$B-z$ is simply
connected. So, if $N$ denotes a small ball around $q$ in $X$,
the map from $B-z$ to $N-q$ lifts to the universal cover of $N-q$
(a $G$-covering, where $G$ is the binary icosahedral group), and that is
a coordinate chart for $q$ in $M$, as we want. The resulting endomorphism
of $M$ is quasi-finite.

Next, the Chern number $c_2(\Omega^1_M)$ is the topological
Euler characteristic of $M$ (or $X$),
namely 3, minus $119/120$, because of the stabilizer
group $G$ of order 120 at the point $q$ \cite[Theorem 7.3]{Blache}.
That is, $c_2(\Omega^1_M)=241/120$.

Let $L=O_M(2)$. By Lemma \ref{global},
$\Omega^1_M(L)$ is globally generated outside a finite set.
We have $O_M(1)^2=1$, and $c_1(\Omega^1_M)=O_M(-1)$. Therefore,
\begin{align*}
L\cdot c_1(\Omega^1_M)+c_2(\Omega^1_M)&=-2\; O_M(1)^2+241/120\\
&=1/120,\\
\end{align*}
which is (barely) positive.
By Corollary \ref{inequality}, it follows that there is
no endomorphism $f\colon M\to M$ with $f^*O(1)\equiv O(a)$
for some $a>1$. 
\end{proof}

By Theorem \ref{morphisms},
it also follows that the $E_8$ surface $X$
with no nonzero vector fields
does not have morphisms of unbounded degree from any fixed
smooth projective surface with Picard number 1.
Finally, by Theorem \ref{toric},
$X$ is not the image of a proper toric variety by any morphism.
By the work of Joshi and Gurjar--Pradeep--Zhang
\cite{Joshi, Gurjar}, we have completed the proof
of Theorems \ref{canonical} and \ref{image}.


\small \sc UCLA Mathematics Department, Box 951555,
Los Angeles, CA 90095-1555

totaro@math.ucla.edu
\end{document}